\documentclass{amsart}

%%%%%%%%%%%%%%% ---------------------- PACKAGES ---------------------- %%%%%%%%%%%%%%%

\usepackage[T1]{fontenc}
\usepackage[utf8]{inputenc}

\usepackage[english]{babel}

\usepackage[leqno]{amsmath}
\usepackage{amsthm}
\usepackage{amsfonts}
\usepackage{amssymb}

\usepackage{color}
\usepackage{wrapfig}

\usepackage{graphicx}
\DeclareGraphicsExtensions{.eps}

\usepackage{subfig}
\usepackage{caption}

\usepackage{hyperref}
\hypersetup{
	unicode=true,
	colorlinks=true,
	urlcolor=blue,
	pdfauthor=Ewa Nowakowska,
	pdftitle=Dimension reduction for unknown clusters,
	linkcolor=blue
}

%%%%%%%%%%%%%%% ---------------------- ENVIRONMENTS -------------------- %%%%%%%%%%%%%%%

\theoremstyle{plain}
\newtheorem{lemma}{Lemma}[section]

\newtheorem{proposition}[lemma]{Proposition}
\newtheorem{corollary}[lemma]{Corollary}

\theoremstyle{remark}
\newtheorem{remark}[lemma]{Remark}

\theoremstyle{definition}

%%%%%%%%%%%%%%% ------------------------- COMMANDS -------------------- %%%%%%%%%%%%%%%

\providecommand{\norm}[1]{\left\lVert#1\right\rVert}
\providecommand{\abs}[1]{\left\lvert#1\right\rvert}
\DeclareMathOperator{\tr}{tr}
\DeclareMathOperator{\var}{var}

\DeclareMathOperator{\diag}{diag}
\DeclareMathOperator{\rank}{rank}
\DeclareMathOperator{\sd}{sd}
\DeclareMathOperator{\sdist}{sdist_o}
\DeclareMathOperator{\sss}{sss}

%%%%%%%%%%%%%%% ------------------------- TITLE -------------------- %%%%%%%%%%%%%%%

\title[Dimension reduction for unknown clusters]{Dimension reduction for data of unknown cluster structure}
%\thanks{Supported by National Science Center of Poland, DEC-2011/01/N/ST6/04174}

%%%%%%%%%%%%%%% ------------------------- DOCUMENT -------------------- %%%%%%%%%%%%%%%

\begin{document}

\author{Ewa Nowakowska}
\author{Jacek Koronacki}
\author{Stan Lipovetsky}

\address{Ewa Nowakowska\\Institute of Computer Science\\Polish Academy of Sciences\\
		ul. Jana Kazimierza 5\\01-248 Warszawa\\Poland}
\email{ewa.nowakowska@ipipan.waw.pl}

\address{Jacek Koronacki\\Institute of Computer Science\\Polish Academy of Sciences\\
		ul. Jana Kazimierza 5\\01-248 Warszawa\\Poland}	
\email{jacek.koronacki@ipipan.waw.pl}

\address{Stan Lipovetsky\\GfK Custom Research North America\\Marketing \& Data Sciences\\8401 Golden Valley Rd.\\Minneapolis MN 55427\\USA}	
\email{stan.lipovetsky@gfk.com}

\subjclass[2000]{62H25, 62H30}

\begin{abstract}
 	For numerous reasons there raises a need for dimension reduction that preserves certain characteristics of data. In this work we focus on data coming from a mixture of Gaussian distributions and we propose a method that preserves distinctness of clustering structure, although the structure is assumed to be yet unknown. The rationale behind the method is the following: (i) had one known the clusters (classes) within the data, one could facilitate further analysis and reduce space dimension by projecting the data to the Fisher's linear subspace, which --- by definition --- preserves the structure of the given classes best (ii) under some reasonable assumptions, this can be done, albeit approximately, without the prior knowledge of the clusters (classes). In the paper, we show how this approach works. We present a method of preliminary data transformation that brings the directions of largest overall variability close to the directions of the best between-class separation. Hence, for the transformed data, simple PCA provides an approximation to the Fisher’s subspace.  We show that the transformation preserves distinctness of unknown structure in the data to a great extent.
\end{abstract}

\keywords{dimension reduction, Gaussian mixture models, Fisher's subspace, principal component analysis}

\maketitle

%%%%%%%%%%%%%%%%%%%%%%%% INTRODUCTION %%%%%%%%%%%%%%%%%%%%%%%%
\section{Introduction}

%%%%%%%% state of the art
\subsection{State-of-the-art}

Dimension reduction techniques, also referred to as feature extraction algorithms, are a common way of reducing intrinsic complexity of data and consequently facilitating its further analysis. It is typically expected that certain characteristics of data will be preserved in the process. In particular, for data exhibiting clustering structure, the structure is expected to be preserved to a largest possible extent. Frequently it is captured in terms of distances between observations as in \cite{bryant}, which describes one of first methods for linear feature extraction in this context. Another line of works starts with \cite{odell79} that proposes a transformation for continuous data that lowers the dimension without increasing the probabilities of misclassification. The approach is further developed in \cite{decell81}, \cite{young} and \cite{tubbs}. Among more recent works \cite{letters} proposes a method of dimension reduction that preserves clustering structure, however it takes the common assumption of known cluster assignments. Finally \cite{wang} presents an interesting overview of methods in an application to a pattern recognition task. 

The attempt to approach the problem of dimension reduction trying to preserve distinctness of the structure originates in a series of works on learning mixture parameters in an appropriate subspace. In \cite{kalai} one-dimensional random projections were considered and then in \cite{moitra} generalized to arbitrary number of clusters. Based on Johnson-Lindenstrauss (concentration) theorem, \cite{dasgupta} suggested random projections to substantially lower -- but in general --- more than one-dimensional subspace. In \cite{kannan} the distributional assumptions were relaxed, however the main assumption of high initial cluster separation intrinsic for concentration theorem remained. Only in \cite{brand} random projections were replaced with spectral approach, making substantial progress in relaxing the requirement of initial cluster separation. It was first applied in \cite{vempala} and then the results were improved in \cite{ach} and \cite{kannan1}. A breakthrough was made by \cite{vemp}. The authors presented an affine invariant parameter learning algorithm where the preliminary data transformation was used to enhance the distinctness of the clustering structure and thereby further relaxing the separability assumptions. From our perspective it meant that it is possible to sharpen the clustering structure without actually knowing it. This significant discovery has become the major inspiration for the method proposed in the next sections.

%%%%%%%% model
\subsection{Model and notation}

We consider a data set $X = ( x_1, \ldots, x_n)^T, \ X \in \mathbb{R}^{n \times d}$ of $n$ observations coming from a mixture of $k$ $d$-dimensional normal distributions
\begin{equation*}
	f(x) = \pi_1 f_1(\mu_1, \mathbf{\Sigma}_1)(x) + \ldots + \pi_k f_k(\mu_k, \mathbf{\Sigma}_k)(x),
\end{equation*}
where
\begin{equation*}
	f_l(\mu_l,\mathbf{\Sigma}_l)(x) = \frac{1}{(\sqrt{2 \pi})^d \sqrt{\det\mathbf{\Sigma}_l}} e^{-\frac{1}{2}(x-\mu_l)^T
  \mathbf{\Sigma}_l^{-1}(x-\mu_l)}.
\end{equation*}
We call each $f_l(\mu_l, \mathbf{\Sigma}_l)$, $l = 1, \ldots, k$ a component of the mixture and each $\pi_l$, $l = 1, \ldots, k$ a mixing factor of the corresponding component (see \cite{kmb} or \cite{htf} and \cite{Lipovetsky:2013aa} or \cite{Lipovetsky:2012aa} for alternatives). We assume that for all the components equal mixing factors are assigned $\pi_1 = \dots = \pi_k = \frac{1}{k}$. However, we allow different covariance matrices $\mathbf{\Sigma}_l$. Additionally we assume large space dimension with respect to the number of components $d > k-1$ to leave room for dimension reduction. We also assume large number of observations with respect to $d$, that is $n \gg d$. We take the number of components $k$ as known. This puts no constrains on our considerations as the procedure may easily be repeated for all $k$ within the range of interest. The parameters of the mixture are given by $\mu = (1/k) \sum_{l = 1}^k \mu_l$, $\mu \in \mathbb{R}^d$ and $\mathbf{\Sigma} 	= (1/k) \sum_{l = 1}^k \mathbf{\Sigma}_l+ (1/k)\sum_{l = 1}^k (\mu_l -\mu) (\mu_l - \mu)^T$, $\mathbf{\Sigma} \in \mathbb{R}^{d \times d}$. The latter constitutes the covariance decomposition to its within and between cluster component (see \cite{kmb}). 

We assume that each mixture component corresponds to one cluster. A grouping that divides observations into clusters is called a clustering solution or a clustering structure. Note that heterogeneity of covariance matrices allows for varied clusters' shapes, while equal mixing factors imply balanced cluster sizes.

Let $\mu_X \in \mathbb{R}^d$ and $\mathbf{\Sigma}_X \in \mathbb{R}^{d \times d}$ refer to the empirical estimates of the mixture parameters. We assume the covariance matrix to be of full rank, $\rank(\mathbf{\Sigma}_X) = d$. Let $T_X = n \mathbf{\Sigma}_X$ be the \emph{total scatter matrix} for $X$. We say that data is in \emph{isotropic position} if $\mu_X = \mathbf{0}$ and $T_X = \mathbf{I}$.

For symmetric $C \in \mathbb{R}^{d \times d}$ let $C = A_{C}L_{C}A_{C}^T$  be the \emph{spectral decomposition (eigenproblem solution)} for matrix $C$, where
$ L_{C} = \diag(\lambda^{C}_1, \ldots, \lambda^{C}_d)$, $\lambda^{C}_1 \geq \ldots \geq \lambda^{C}_d)$,
is a matrix of eigenvalues for $C$ in a non-decreasing order and $A_{C} = (a^{C}_1, \ldots, a^{C}_d)$ is a matrix of the corresponding column eigenvectors. Alternatively, when considering the eigenproblem for different data sets, we will use the data set as a subscript or superscript (e.g. $C_X = A_{X}L_{X}A_{X}^T$). By $\mathit{PC(k-1)}$ we denote the \emph{principal component subspace} spanned by the first $k-1$ principal components (i.e. $k-1$ eigenvectors of the matrix $\mathbf{\Sigma}_X$ corresponding to its $k-1$ largest eigenvalues, see more in \cite{kmb}, \cite{htf} or \cite{Lipovetsky:2009fk} and references therein for possible extensions).  

By $S^*$ we denote the \emph{Fisher's discriminant (Fisher's subspace)}, which is a $(k-1)$-dimensional subspace that best discriminates $k$ given classes as
\begin{equation*}
	S^* = \operatorname*{argmax}_{\substack{S \subset \mathbb{R}^{d} \\ \dim(S) = k-1}} \frac{\sum\limits^{k-1}_{j = 1} v_j^T B_X v_j }{\sum\limits^{k-1}_{j = 1} v_j^T T_X v_j},
\end{equation*}
where $B_X = \sum_{l = 1}^{k} n_l \left(\mu_{X,l} -\mu_X\right) \left(\mu_{X,l} -\mu_X\right)^T$
is the between cluster component of the total scatter matrix for $X$ with $\mu_{X,l}$ denoting the empirical mean of $l$-th cluster, $l=1, \dots, k$ and $v_1, \ldots, v_{k-1}$ is the orthonormal basis for $S$. Details of this specific definition are given in \cite{fuku}, while the general concept is discussed in \cite{kmb}. 

It is well known that $S^*$ is the subspace spanned by $k-1$ eigenvectors corresponding to the non-zero eigenvalues of a generalized eigenproblem defined by $B_X$ and $T_X$ matrices
\begin{equation}\label{eq:1}
	B_X v = \lambda T_X v,
\end{equation}
which reduces to a standard eigenproblem $T_X^{-1} B_X v = \lambda v$.
Note that the solution is scale-invariant and the eigenvalues are in $[0,1]$ interval. For later reference we note that substituting $\tilde{B} = \left(L_{T_X}^{-1/2} A_{T_X}^{T}\right) B_X \left(L_{T_X}^{-1/2} A_{T_X}^{T}\right)^{T}$ and $\tilde{v} = L_{T_X}^{1/2} A_{T_X}^{T} v$,
we get an equivalent standard eigenproblem for $\tilde{B}$
\begin{equation}\label{eq:1a}
	\tilde{B} \tilde{v} = \lambda \tilde{v}.
\end{equation}

In terms of Fisher's discriminant we define \emph{structure distinctness coefficient} as
\begin{equation}\label{eq:2}
	\bar{\lambda^X} = \frac{1}{k-1} \sum_{j = 1}^{k-1} \lambda_{j}^{T_X^{-1}B_X},
\end{equation}
which is the average eigenvalue over $k-1$ largest eigenvalues of the $T_X^{-1}B_X$ eigenproblem and the mean variability in the Fisher's subspace at the same time. The choice of this particular measure is further explained in Section \ref{dist}.

For all the notation, when it is clear from the context, subscripts and superscripts are omitted.

%%%%%%%% concept
\subsection{Concept}

In principle, the most desirable way to reduce dimension and preserve structure is to project data to $S^*$ which by definition discriminates groups best. However, $S^*$ is defined by cluster structure, so the projection is infeasible if the classes are unknown. On the other hand, a simple projection to $PC(k-1)$ --- which does not require cluster assignments --- may blur the structure as it is shown in the first chart of Fig. \ref{rys:1}. Therefore, the idea is to derive a prior data transformation that makes $PC(k-1)$ approximate $S^*$ and preserves distinctness of the original structure at the same time. PCA on the transformed data is expected to capture the structure well and it is feasible even for unknown classes. As such, it facilitates further structure exploration in the subspace of reduced dimension.

\begin{figure}[!ht]
	\begin{center}
		\includegraphics[width=1\linewidth]{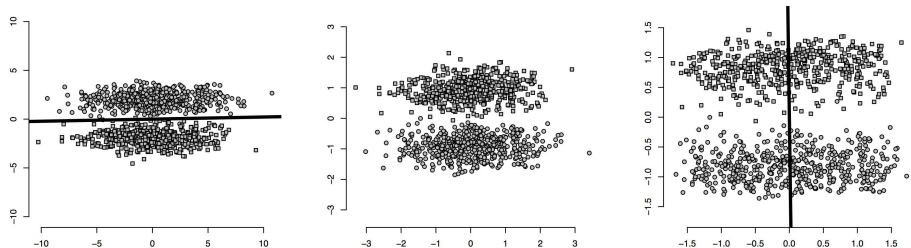}
		\caption{Original data, isotropic data and weighted data respectively, for $k = 2$ and $d = 2$. Principal direction $PC(k-1) = PC(1)$ marked with the black line}
		\label{rys:1}
	\end{center}
\end{figure}

The actual data transformation is divided into two steps referred to as isotropization and weighting. The motivation behind the first one is to bring the mixture to a generic and uniform position that allows for comparisons. Subsection \ref{isopres} shows that this step does not affect distinctness of the structure in data. It can also be noted that for data in isotropic position the Fisher's subspace equals the intermean subspace, which sets an intuitive link between the abstract concept of Fisher's subspace and the tangible notion of cluster centers. However, principal component analysis does not operate on the data of uniform variability (no unique solution). Therefore, the second step is designed to introduce small
perturbation. Namely, it is meant to make the principal components coincide with the directions of best class discrimination and consequently bring $PC(k-1)$ close to $S^*$. At the same time the initial structure distinctness is preserved with only negligible error as it is shown in Subsection \ref{weiperturb}. This concept is illustrated by the last chart of Fig. \ref{rys:1}. Although projection to $PC(k-1) = PC(1)$ carried no information on the clustering structure for the original data, for the transformed data principal direction coincides with the direction of best between cluster discrimination.

Let us emphasize here, that we assume clusters (classes) to be known, which is inevitable to examine the method's properties. However, the ultimate algorithm, of course, operates on raw data only and does not require the knowledge of cluster belongings. Note also, that when speaking of motivation we use theoretical concepts at population level, however the actual calculations are made for given data, i.e. at sample level.

%%%%%%%% model
\subsection{Content}
Section \ref{trans} gives details of the data transformation. It recalls explicit formula for isotropization and justifies the derivation of weights. Section \ref{dist} discusses the characteristics of the structure distinctness coefficient and explains the choice. It also proves that the data transformation affects the structure distinctness only to a negligible extent. Section \ref{diss} focuses on the performance of the method, studying its effect on similarity between the $PC(k-1)$ and $S^*$. Finally, Section \ref{conc} summarizes the findings and points to potential applications of the method.

%%%%%%%%%%%%%%%%%%%%%%% DATA TRANSFORMATION %%%%%%%%%%%%%%%%%%%%%%%
\section{Data transformation}\label{trans}

%%%%%%%% Isotropic transformation
\subsection{Isotropic transformation}

The aim of this step is to transform the data from $X$ to $Y$ so its grand mean is equal to zero (centered) and its scatter matrix is equal to identity matrix (decorrelated). The first step reduces to a simple subtraction of the grand mean
\begin{equation*}
	X_0 =  \left( x_1 - \bar{\mu}^X, \ldots, x_n - \bar{\mu}^X\right)^T,
\end{equation*}
while the second is obtained with help of spectral decomposition of $T_{X_0} =  X_0^T X_0 = A_{T_{X_0}} L_{T_{X_0}} A_{T_{X_0}}^{T} = \left(A_{T_{X_0}} L_{T_{X_0}} ^{\frac{1}{2}}\right) \left(L_{T_{X_0}}^{\frac{1}{2}} A_{T_{X_0}}^{T}\right)$.
Observing that $A_{T_{X_0}}$ is orthonormal and $L_{T_{X_0}}$ is diagonal, we get $\left(X_0 A_{T_{X_0}} L_{T_{X_0}}^{-\frac{1}{2}}\right)^{T} \left(X_0 A_{T_{X_0}} L_{T_{X_0}}^{-\frac{1}{2}}\right) = \mathbf{I}$,
which proves that
\begin{equation}\label{eq:3}
	Y = X_0 A_{T_{X_0}} L_{T_{X_0}}^{-\frac{1}{2}}
\end{equation}
is the required isotropic transformation of $X$.

%%%%%%%% Weighting
\subsection{Weighting}

The second step of data transformation --- from $Y$ to $Z$ --- is required to differentiate variability and make PCA operational. Namely, it is meant to reduce variance in all the directions but the ones that are determined by the cluster centers. As such, it will make the directions of largest overall variability coincide with the directions of best cluster discrimination and consequently bring $PC(k-1)$ close to $S^*$.

The transformation can only distort the clustering structure to a little extent, otherwise it would hamper the inference on the initial structure distinctness level based on the results for the transformed data. The idea, then, is to relocate the extreme observations only, leaving the core of the structure almost untouched. The extreme observations contribute to the total scatter, but they are only of secondary meaning to the general distinctness of the clustering structure.

In order to motivate our choice of the weighting function, let $\omega = (\omega_1, \ldots, \omega_n)$, $\omega \in \mathbb{R}^n$ denote a vector of weights, then $Z = \diag (\omega) Y$. By
\begin{equation}\label{eq:100}
		F_{i,j} = 	\begin{cases}
							1 - \frac{1}{n} & \text{for } i = j, \\
							-\frac{1}{n} & \text{for } i \neq j,
						\end{cases}
\end{equation}
for $F \in \mathbb{R}^{n \times n}$ we define a centering operator (i.e. $Z_0 = FZ$). For a matrix of cluster belongings $E \in \mathbb{R}^{n \times k}$
\begin{equation*}
		E_{i,l} =	\begin{cases}
							1  & \text{for } c(i) = l, \\
							0 & \text{for } c(i) \neq l.
						\end{cases}
\end{equation*}
we define a hat matrix as $H = E (E^TE)^{-1} E^T$. Using this notation we formulate two remarks and the following lemma.

\begin{remark}\label{lem:1}
	For centering operator $F$, the equalities $F^T = F$ and $FF = F$ hold.
\end{remark}

\begin{proof}
	The first equality is due to the matrix symmetry clear from \eqref{eq:100}. The second is based on a simple observation that centering data more than once has no additional effect on it. Alternatively, it may also be proved by a simple calculation using \eqref{eq:100}.
\end{proof}

\begin{remark}\label{rem:1}
	Hat matrix $H = E (E^TE)^{-1} E^T$ is symmetric, semi positive definite and has $k$ non-zero eigenvalues equal to $1$.
\end{remark}

\begin{proof}
	Matrix $H$ is symmetric because
	\begin{equation*}
		H^T = \left(E \left(E^TE\right)^{-1} E^T\right)^T = E \left(E^TE\right)^{-1} E^T=H.
	\end{equation*}
	Using the fact that the eigenvalues for $A \cdot B$ and $B \cdot A$ coincide up to the possible zero eigenvalues we get that the non-zero eigenvalues for $H = E (E^TE)^{-1} E^T$ are equal to the non-zero eigenvalues of $E^T E (E^TE)^{-1} = \mathbf{I}_k$, where $\mathbf{I}_k$ is a $k \times k$ identity matrix. Therefore, $H$ has $k$ non-zero eigenvalues equal to $1$, which means in particular that it is a semi positive definite matrix.
\end{proof}

\begin{lemma}\label{lem:2}
		Total scatter matrix $T_{Z_0}$ and between cluster scatter matrix $B_{Z_0}$ for transformed and centered data $Z_0$ can be expressed in terms of data $Y$ in isotropic position as
	\begin{equation}\label{eq:4a}
		T_{Z_0} = Y^T \diag(\omega) F \diag(\omega) Y
	\end{equation}
and
	\begin{equation}\label{eq:4b}
		B_{Z_0} = Y^T \diag(\omega) H \diag(\omega) Y.
	\end{equation}
\end{lemma}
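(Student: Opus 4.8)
The plan is to write each scatter matrix as a quadratic form in the transformed-and-centered data $Z_0$, substitute $Z_0 = F\diag(\omega)Y$, and then collapse the resulting matrix products using the algebraic properties of $F$ and $H$ recorded in Remarks~\ref{lem:1} and \ref{rem:1}. The two structural facts I would use at the outset are those already implicit in the isotropization step: for any \emph{centered} data matrix $W_0$ the total scatter is the Gram matrix $T_{W_0} = W_0^T W_0$, and, since $H$ replaces each row of a matrix by the mean of its cluster, the between-cluster scatter of centered data is $B_{W_0} = (HW_0)^T (HW_0)$.

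First I would establish \eqref{eq:4a}. Writing $T_{Z_0} = Z_0^T Z_0$ and inserting $Z_0 = F\diag(\omega)Y$ gives $T_{Z_0} = Y^T \diag(\omega)\, F^T F\, \diag(\omega) Y$, where I used that $\diag(\omega)$ is symmetric. By Remark~\ref{lem:1} we have $F^T = F$ and $FF = F$, hence $F^T F = F$, and \eqref{eq:4a} follows immediately. This direction is routine and carries no real difficulty.

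For \eqref{eq:4b} I would start from $B_{Z_0} = (HZ_0)^T (HZ_0)$. By Remark~\ref{rem:1} the matrix $H$ is symmetric with all its nonzero eigenvalues equal to $1$, so it is an orthogonal projection ($H^T = H$, $HH = H$); consequently $B_{Z_0} = Z_0^T H^T H Z_0 = Z_0^T H Z_0$. Substituting $Z_0 = F\diag(\omega)Y$ turns this into $Y^T \diag(\omega)\, F^T H F\, \diag(\omega) Y$, so the entire statement reduces to simplifying the sandwiched factor $F^T H F = FHF$ down to $H$.

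The step I expect to be the main obstacle is exactly this last simplification, namely showing that conjugating $H$ by the centering operator $F$ leaves the between-cluster geometry unchanged. The governing algebraic fact is that the all-ones vector lies in the column space of the cluster-membership matrix $E$: each row of $E$ sums to $1$, so $E\mathbf{1} = \mathbf{1}$ and therefore $H\mathbf{1} = \mathbf{1}$. This means the grand-mean projector $\mathbf{I} - F = \tfrac{1}{n}\mathbf{1}\mathbf{1}^T$ is absorbed by $H$, so that the contribution of the grand-mean direction must be shown to cancel when passing from $Z_0$ back to $Y$ — this is the analytic content of the centering-invariance of $B$. Handling this grand-mean term carefully (and verifying it does not survive in the stated quadratic form in $Y$) is where the argument needs the most attention; once it is controlled, \eqref{eq:4b} follows.
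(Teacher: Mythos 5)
Your treatment of \eqref{eq:4a} is correct and coincides with the paper's: write $T_{Z_0}=Z_0^TZ_0$, substitute $Z_0=F\diag(\omega)Y$, and collapse $F^TF=F$ via Remark \ref{lem:1}. Your opening of \eqref{eq:4b} is also sound, and in one respect cleaner than the paper's: since $E^TE=\diag(n_1,\dots,n_k)$, the identity $B_{W_0}=W_0^T H W_0$ for centered $W_0$ holds without invoking balanced cluster sizes, whereas the paper passes through $B_{Z_0}=nM_{Z_0}\diag(\pi)M_{Z_0}^T$ and the balancedness identity $n\diag(\pi)\left(E^TE\right)^{-1}=\mathbf{I}$. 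Both routes arrive at the same crux: $B_{Z_0}=Y^T\diag(\omega)\,FHF\,\diag(\omega)Y$, and one must pass from $FHF$ to $H$.

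It is exactly there that your proposal has a genuine gap, and the step you defer (``verifying the grand-mean term does not survive'') cannot be carried out as an exact cancellation. From $H\mathbf{1}=\mathbf{1}$ and $F\mathbf{1}=0$ one gets $FHF=H-\frac{1}{n}\mathbf{1}\mathbf{1}^T$, so
\begin{equation*}
	Y^T\diag(\omega)\,FHF\,\diag(\omega)Y \;-\; Y^T\diag(\omega)\,H\,\diag(\omega)Y \;=\; -\frac{1}{n}\Bigl(\sum_{i=1}^n\omega_i y_i\Bigr)\Bigl(\sum_{i=1}^n\omega_i y_i\Bigr)^T,
\end{equation*}
and this rank-one term is generically nonzero: although $\sum_i y_i=0$, the weights $\omega_i=\bigl(1+\norm{y_i}^2/\alpha\bigr)^{-1/2}$ are non-constant, so the weighted data $\diag(\omega)Y$ is no longer centered (already in $d=1$, data proportional to $(-2,1,1)$ gives $\sum_i\omega_i y_i\neq 0$). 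The paper does not close this step by algebra. It argues instead that the generalized eigenproblem defined by $(B_{Z_0},T_{Z_0})$ is equivalent to a CCA between the centered data and the indicator matrix $E$, and that such a CCA is invariant under centering of $E$; on that basis it ``assumes $FE=E$'' and replaces $FHF$ by $H$ without affecting any downstream quantity (in particular the Fisher eigenvalues entering \eqref{eq:2}). In other words, \eqref{eq:4b} is justified in the paper as defining the same eigenproblem as the exact expression, not as an entrywise matrix identity; your plan, which aims at the latter, would fail at precisely the step you flagged as needing the most attention.
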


\begin{proof}
	The proof is purely technical and uses the properties of $F$ and $H$ matrices in the context of the assumed model.
	
	Transformed and centered data $Z_0$ can be expressed as
		\begin{equation}\label{eq:117}
			Z_0 = F \diag{(\omega)} Y
		\end{equation}
		and its total scatter matrix $T_{Z_0}$ --- using Remark \ref{lem:1} --- equals
		\begin{equation}\label{eq:118}
			T_{Z_0} =  Z_0^T Z_0 = \left(F\diag(\omega) Y\right)^T (F \diag(\omega) Y) =  Y^T \diag(\omega) F \diag(\omega) Y,
		\end{equation}
		which proves formula \eqref{eq:4a}. 
	
	Between cluster scatter matrix --- in its corresponding matrix form --- is given by
		\begin{equation}\label{eq:126}
			B_{Z_0} = n M_{Z_0} \diag(\pi) M^T_{Z_0}
		\end{equation}
	for $M_{Z_0}$ a matrix of column vectors of means for subsequent clusters $M_{Z_0} = Z^T_0 E (E^TE)^{-1}$, $M_{Z_0} \in \mathbb{R}^{d \times k}$. Expanding $M_{Z_0}$ in \eqref{eq:126} and using \eqref{eq:117} for expressing $Z_0$ in terms of $Y$ we get
		\begin{multline*}
			B_{Z_0} = n \left(Z_0^T E \left(E^TE\right)^{-1}\right) \diag(\pi) \left(Z_0^T E \left(E^TE\right)^{-1}\right)^T = \\ 
			= n Y^T \diag(\omega) F E \left(E^TE\right)^{-1} \diag(\pi) \left(E^TE\right)^{-1} E^T F \diag(\omega) Y.
		\end{multline*}
	The following equality for balanced cluster sizes
		\begin{equation*}
			n \diag (\pi) \left(E^TE\right)^{-1} = n \diag \left(\frac{1}{k}\right) \left(\diag\left(\frac{n}{k}\right)\right)^{-1}= \diag \left(\frac{n}{k}\right) \left(\diag\left(\frac{n}{k}\right)\right)^{-1} = \mathbf{I}
		\end{equation*}
	reduces the above expression to
		\begin{equation}\label{eq:127}
			B_{Z_0} = Y^T \diag(\omega) F E \left(E^TE\right)^{-1} E^T F \diag(\omega) Y = Y^T \diag(\omega) J \diag(\omega) Y,
		\end{equation}
	for a centered cluster belonging operator $J =  F E (E^TE)^{-1} E^T F$.
		
	The formula \eqref{eq:127} can be further simplified due to the specific properties of the problem considered. A simple calculation shows that if one variable is centered, centering the other one has no impact on their correlation. The same applies for canonical correlation as it is entirely correlation-based. As the generalized eigenproblem defined by matrices $B_Y$ and $T_Y$ (or $B_{Z_0}$ and $T_{Z_0}$ analogously) can be equivalently stated in terms of a CCA problem it can be interpreted as canonical correlation between $Y$ ($Z_0$ alternatively) and cluster belonging matrix denoted by $E$. As we transform the data to be centered, we can assume that $E$ is centered as well, without any impact on the ultimate result of the analysis. As such $FE = E$. It reduces formula \eqref{eq:127} to
		\begin{equation}\label{eq:132}
			B_{Z_0} = Y^T \diag(\omega) E \left(E^TE\right)^{-1} E^T \diag(\omega) Y = Y^T \diag(\omega) H \diag(\omega) Y,
		\end{equation}
	which gives \eqref{eq:4b} and concludes the proof.	
\end{proof}

We proceed now with a series of approximations and transformations that motivate the derivation of the weights. As we intend to introduce only little distortion, we may assume that after the weighting data remains centered approximately at $0$, due to balanced cluster sizes. Thus, the total scatter matrix may be approximated with
\begin{equation*}
	T_{Z_0} = Y^T \diag(\omega) F \diag(\omega) Y \approx Y^T \diag(\omega)^2  Y,
\end{equation*}
as the centering factor can be skipped. To relocate the most distant observations, we draw them closer to the data center, at the rate inversely proportional to their original distance. Note, that for zero-centered data this idea corresponds to equalizing their contribution to the total scatter. The scatter matrix for $Y$ was equal to identity so --- unless significantly distorted by the weighting --- the largest and most meaningful entries remain on the diagonal and the off-diagonal elements exert only negligible effect on the total scatter. The diagonal elements of $T_{Z_0}$ are equal to
\begin{equation*}
	t_{Z_0 j,j} = \sum_{i = 1}^n \omega_i^2 y_{i,j}^2,
\end{equation*}
so their sum over the diagonal --- that corresponds to the total scatter --- equals
\begin{equation*}
	c = \sum_{j = 1}^d t_{Z_0 j,j} = \sum_{j = 1}^d \sum_{i = 1}^n \omega_i^2 y_{i,j}^2 = \sum_{i = 1}^n \omega_i^2 \sum_{j = 1}^d y_{i,j}^2 =  \sum_{i = 1}^n \omega_i^2 \norm{y_i}^2,
\end{equation*}
where $c$ captures the total sum of the elements on the diagonal of the scatter matrix $T_{Z_0}$ and $\norm{\cdot}$ refers to the vector's euclidean norm. Dividing both sides by the constant $c$ we get
\begin{equation*}
	1 =  \sum_{i = 1}^n \omega_i^2 \left(\frac{1}{c} \norm{y_i}^2\right).
\end{equation*}
To maintain the above equality and equalize the contribution of all the observations to the total scatter we take
\begin{equation*}
	\omega_i^2 = \frac{1}{\frac{1}{c} ||y_i||^2}
\end{equation*}
and we modify it adding $1$ in the denominator. On one hand it prevents explosions for small norms, while on the other it guarantees virtually no changes to the very core of the data structure, leaving the central observations untouched
\begin{equation}\label{eq:5}
	\omega_i = \sqrt{\frac{1}{1 + \frac{1}{c} \norm{y_i}^2}} = \sqrt{\frac{1}{1 + \frac{1}{\alpha} \norm{y_i}^2}}.
\end{equation}
As a rule of thumb, the weighting parameter $\alpha$ was fixed at  $\alpha=0.5$, independent from dimensionality $d$, number of clusters $k$ and other data parameters to allow for cross comparisons. It ensures meaningful contribution of observations' individual location, while still granting negligible distinctness' perturbations due to \eqref{eq:12} considered later.

Note, that \cite{vemp} suggests exponential choice of the weighting function
given by
\begin{equation*}
	\omega^{BV}_i = \exp\left(-\frac{\norm{y_i}^2}{2 \beta}\right) =\exp\left(-\frac{\norm{y_i}^2}{\alpha}\right),
\end{equation*}
where $\beta \leq k\cdot d $. For comparison ease let us replace $2\beta = \alpha$. Taylor's expansions for both weighting functions show that their behavior around zero is similar, however for the hyperbolic weighting \eqref{eq:5} the decrease is slightly slower so a larger area of central observations remains untouched. At the same time, for peripheral observations, the values of exponential weighting drop more rapidly with the increase in the observation's original distance. As such, there is less variability in transition values for most distant observations, which leads to more squeezed and spherical data structure. To sum up, for hyperbolic weighting \eqref{eq:5} smaller changes to the central area tend to preserve structure distinctness better, while higher variability in peripheral behavior makes principal components recognize the directions of best cluster discrimination more accurately.

%%%%%%%%%%%%%%%%%%%%%%% STRUCTURE DISTINCTNESS %%%%%%%%%%%%%%%%%%%%%%%
\section{Structure distinctness}\label{dist}

%%%%%%%%%% structure distinctness coefficient
\subsection{Structure distinctness coefficient}

For mixture models, most intrinsic and intuitive structure distinctness coefficient is defined as
\begin{equation}\label{eq:9}
	\sdist = 1 -\int_{\mathbb{R}^d} \min \left(  \pi_1 f_1 \left(\mu_1, \mathbf{\Sigma}_1\right), \pi_2 f_2 \left(\mu_2,\mathbf{\Sigma}_2\right)\right) (x) \text{dx} = 1-\text{MLE}_{\text{err}}, %\,\ud x.
\end{equation}
where $\text{MLE}_{\text{err}}$ stands for probability of misclassification with maximum likelihood estimate (MLE), which equals the integral that captures the area of overlap between the components, $\sdist \in [0,1]$ (for reference see \cite{ab}, \cite{ray}, \cite{sun}). The interpretation and behavior of $\sdist$ is entirely intuitive, however the coefficient is virtually intractable for mixtures of varied covariance (heterogeneous) or higher dimension. Its best linear approximation does not have a closed analytical form either (see \cite{ab}). Therefore, $\sdist$ may only serve as a reference measure and should be replaced with another coefficient that reflects its behavior but is easier to handle analytically. For this purpose, we introduce \eqref{eq:2}, expressed in terms of Fisher's eigenvalues. It captures average variability in Fisher's subspace. As desired, it may only grow with increase in between cluster scatter or decrease in within cluster scatter, as Fisher's task is scale-invariant, and remains within $[0,1]$ interval. Analysis of the relation between the two coefficients showing their strong correspondence can be found in \cite{ja_overlap}. An example directly supporting the choice is presented in Fig. \ref{rys:2}.
\begin{figure}
\begin{center}
\includegraphics[width=0.7\textwidth]{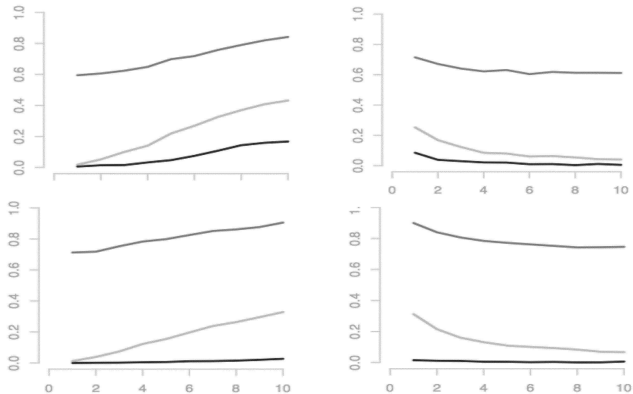}
		\caption{Coefficients of structure distinctness --- integral measure \eqref{eq:9} (top line),  Fisher's average eigenvalue \eqref{eq:2} (middle line), Fisher's minimum non-zero eigenvalue (bottom line) --- effect of increasing between-cluster distance (left panel) and effect of increasing within-cluster dispersion (right panel) }
		\label{rys:2}
\end{center}
\end{figure}

%%%%%%%%%% effect of isotropic transformaiton
\subsection{Preservation under isotropic transformation}\label{isopres}
In a general setup, eigenproblem solution is not preserved under linear transformations. Indeed, for original data $X$ and its linearly transformed counterpart in isotropic position $Y$, the eigenvectors may differ. However, the eigenvalues remain the same.

\begin{lemma}\label{lem:10}
	Isotropic transformation does not change eigenvalues for the Fisher's eigenproblem.
\end{lemma}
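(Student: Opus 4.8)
The plan is to exhibit the isotropic transformation as an invertible \emph{congruence} acting simultaneously on the total scatter matrix $T_X$ and the between cluster scatter matrix $B_X$, and then to recognise the induced change in the standard Fisher operator $T_X^{-1}B_X$ as a similarity transformation, which leaves the spectrum untouched.

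First I would record that, up to the grand-mean subtraction, \eqref{eq:3} is the linear map $Y = X_0 W$ with $W = A_{T_{X_0}} L_{T_{X_0}}^{-1/2}$. Because $A_{T_{X_0}}$ is orthonormal and $\rank(\mathbf{\Sigma}_X) = d$ forces every entry of $L_{T_{X_0}}$ to be strictly positive, $W$ is invertible; this is the only place the full-rank hypothesis is used. I would also note that centering the whole sample by its grand mean is a rigid shift of all rows by a common vector, so it alters neither the total scatter (already centered) nor the between cluster scatter (built from the differences $\mu_{X,l}-\mu_X$); hence $T_{X_0}=T_X$ and $B_{X_0}=B_X$.

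Next I would compute how the two scatter matrices transform under $Y=X_0 W$. Since each observation is mapped by $W^T$, the cluster means and the grand mean are mapped the same way, and both $T$ and $B$ transform by the identical congruence
\begin{equation*}
	T_Y = W^T T_X W, \qquad B_Y = W^T B_X W.
\end{equation*}
Feeding these into the standard form of \eqref{eq:1} gives
\begin{equation*}
	T_Y^{-1}B_Y = \left(W^T T_X W\right)^{-1}\left(W^T B_X W\right) = W^{-1}\left(T_X^{-1}B_X\right)W,
\end{equation*}
so $T_Y^{-1}B_Y$ and $T_X^{-1}B_X$ are similar through the invertible matrix $W$. Similar matrices share the same characteristic polynomial, hence the same eigenvalues with the same multiplicities, and in particular the $k-1$ largest eigenvalues entering \eqref{eq:2} coincide for $X$ and $Y$.

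The only genuinely delicate point is the claim that $B$ transforms by exactly the same congruence as $T$, together with its invariance under grand-mean centering; once that bookkeeping is in place the similarity is immediate. As a consistency check one may observe that $W^T = L_{T_{X_0}}^{-1/2}A_{T_{X_0}}^{T}$ is precisely the matrix appearing in the substitution preceding \eqref{eq:1a}, so that $B_Y = \tilde{B}$; since $T_Y = \mathbf{I}$, the Fisher eigenproblem for $Y$ collapses to the standard eigenproblem $\tilde{B}\tilde{v} = \lambda \tilde{v}$ of \eqref{eq:1a}, whose eigenvalues are by construction those of the original Fisher problem.
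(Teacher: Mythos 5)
Your proof is correct and follows essentially the same route as the paper: both arguments rest on the observation that isotropization acts as the congruence $W^{T}(\cdot)W$ on the scatter matrices, turning $T_X$ into the identity and $B_X$ into $\tilde{B}$, so that the transformed Fisher problem is exactly the standard eigenproblem \eqref{eq:1a} already shown equivalent to the original generalized one. Your main body merely makes the eigenvalue preservation self-contained by phrasing it as the similarity $T_Y^{-1}B_Y = W^{-1}\left(T_X^{-1}B_X\right)W$, and your closing ``consistency check'' is precisely the paper's own proof.
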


\begin{proof}
Consider data $Y$ in isotropic position obtained from the centered data $X_0$ with \eqref{eq:3}. Then, the isotropic transformation for a column vector is given by
	\begin{equation*}
		y = L_{T_X}^{-1/2} A_{T_X}^T x.
	\end{equation*}
As such, matrix $B_X$ becomes $B_{Y} = \left(L_{T_X}^{-1/2} A_{T_X}^{T}\right) B_X \left(L_{T_X}^{-1/2} A_{T_X}^{T}\right)^{T}$ and $T_X$ by definition of isotropic transformation changes into $T_{Y} = \mathbf{I}$. Hence, for data in isotropic position, the generalized eigenproblem \eqref{eq:1} automatically reduces to a standard eigenproblem
	\begin{equation*}
		B_{Y} y = \lambda y.
	\end{equation*}
	As $B_Y = \tilde{B} $, the above equation corresponds to \eqref{eq:1a} and yields the same solution, in particular the eigenvalues are the same for both problems.

\end{proof}

As structure distinctness is defined by \eqref{eq:2} as an average eigenvalue for the Fisher's eigenproblem, the following corollary holds.
\begin{corollary}\label{cor:1}
Isotropic transformation does not affect structure distinctness defined by \eqref{eq:2}.
\end{corollary}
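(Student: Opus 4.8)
The plan is to chain together the two results already established in this subsection. Corollary \ref{cor:1} asserts that isotropic transformation does not affect the structure distinctness coefficient $\bar{\lambda^X}$ defined in \eqref{eq:2}, and since this is stated as an immediate consequence of Lemma \ref{lem:10}, the entire argument rests on that lemma together with the definition of the coefficient. The key observation is that \eqref{eq:2} defines $\bar{\lambda^X}$ purely as the arithmetic mean of the $k-1$ largest eigenvalues $\lambda_j^{T_X^{-1}B_X}$ of the Fisher's eigenproblem, a quantity that is a function of the eigenvalues alone and makes no reference whatsoever to the eigenvectors.

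First I would recall that, by Lemma \ref{lem:10}, the isotropic transformation \eqref{eq:3} carries the generalized eigenproblem \eqref{eq:1} for $X$ into the standard eigenproblem $B_Y y = \lambda y$ for the isotropized data $Y$, and that the two problems share exactly the same eigenvalues $\lambda_1, \ldots, \lambda_{k-1}$ (the nonzero ones being the relevant ones). Thus the multiset of ordered eigenvalues $\{\lambda_j^{T_X^{-1}B_X}\}_{j=1}^{k-1}$ is left invariant when one passes from $X$ to $Y$; in particular each individual $\lambda_j$ is unchanged.

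Second, since the structure distinctness coefficient is by \eqref{eq:2} a symmetric function — indeed simply the average — of precisely these $k-1$ eigenvalues, invariance of the eigenvalues forces invariance of the coefficient. Concretely, one writes
\begin{equation*}
	\bar{\lambda^Y} = \frac{1}{k-1} \sum_{j=1}^{k-1} \lambda_j^{B_Y} = \frac{1}{k-1} \sum_{j=1}^{k-1} \lambda_j^{T_X^{-1}B_X} = \bar{\lambda^X},
\end{equation*}
where the middle equality is exactly the content of Lemma \ref{lem:10}. This completes the argument.

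I do not anticipate a genuine obstacle here: the corollary is a formal deduction rather than a result requiring new technical work, so the only thing to be careful about is bookkeeping. The one point worth stating explicitly, to make the deduction airtight, is that \eqref{eq:2} depends on the Fisher's eigenproblem \emph{only} through its eigenvalues and is insensitive to the eigenvectors — which is precisely why the remark preceding the corollary, that ``the eigenvectors may differ'' under the transformation, does no harm. Once that is noted, the corollary follows immediately from Lemma \ref{lem:10}.
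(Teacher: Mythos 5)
Your proposal is correct and follows exactly the paper's route: the paper treats this corollary as an immediate consequence of Lemma \ref{lem:10}, noting that the coefficient \eqref{eq:2} is defined purely as an average of the Fisher eigenvalues, which that lemma shows are invariant under the isotropic transformation. Your write-up merely makes this one-line deduction explicit.
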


%%%%%%%%%% effect of weighting
\subsection{Effect of weighting}\label{weiperturb}

In this subsection we show that the effect of weighting on the structure distinctness can only be negligible. 

We start with a technical Lemma \ref{lem:20}, which shows that squared norms of observations $y_i$ are small on average.

\begin{lemma}\label{lem:20}
	For data $Y= (y_{i,j})_{\substack{i = 1, \ldots, n \\ j = 1, \ldots, d}} $ in isotropic position we have
	\begin{equation}\label{eq:6a}
		\overline{\norm{y_i}^2} = \frac{d}{n} \ll 1
	\end{equation}
\end{lemma}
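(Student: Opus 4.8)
The plan is to compute the average directly by interchanging the order of summation and recognizing the resulting quantity as the trace of the total scatter matrix $T_Y$. Since $Y$ is in isotropic position, this scatter matrix is the identity, which pins its trace to exactly $d$, and the standing assumption $n \gg d$ then delivers the smallness claim.

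First I would write the average out explicitly as
\begin{equation*}
	\overline{\norm{y_i}^2} = \frac{1}{n} \sum_{i=1}^n \norm{y_i}^2 = \frac{1}{n} \sum_{i=1}^n \sum_{j=1}^d y_{i,j}^2.
\end{equation*}
The key observation is that swapping the summation order isolates, for each coordinate $j$, the quantity $\sum_{i=1}^n y_{i,j}^2 = (Y^T Y)_{j,j} = (T_Y)_{j,j}$, so that the double sum collapses to $\sum_{j=1}^d (T_Y)_{j,j} = \tr(T_Y)$. Here I use that the total scatter matrix of the isotropized data is $T_Y = Y^T Y$, exactly as in the isotropization construction.

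Next I would invoke the definition of isotropic position, namely $T_Y = \mathbf{I}$, whence $\tr(T_Y) = \tr(\mathbf{I}) = d$. Dividing by $n$ gives the exact equality $\overline{\norm{y_i}^2} = d/n$, and the inequality $d/n \ll 1$ follows at once from the model assumption $n \gg d$.

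There is essentially no hard step here: the whole argument reduces to a single trace identity combined with the isotropy constraint. The only points demanding care are correctly identifying $T_Y = Y^T Y$ (rather than $Y Y^T$) so that the diagonal entries being summed are genuinely the per-coordinate scatters that each equal $1$, and remembering that it is the $n \gg d$ hypothesis — not the computation itself — that upgrades the exact value $d/n$ to the asserted smallness.
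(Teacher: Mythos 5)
Your proposal is correct and is essentially the paper's own argument: the paper's one-line proof performs exactly the same interchange of summation, using that each column sum $\sum_{i=1}^n y_{i,j}^2$ equals $1$ because $T_Y = Y^TY = \mathbf{I}$, so the total is $d$ and the average is $d/n \ll 1$ by the assumption $n \gg d$. You have merely spelled out the trace identity that the paper leaves implicit.
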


\begin{proof}
	For data in isotropic position $\sum_{i=1}^n \norm{y_i}^2 = \sum_{j = 1}^{d} \left( \sum_{i = 1}^{n} y_{i,j}^2 \right) = d$.
\end{proof}

Note, that the average value of \eqref{eq:6a} is very small. It is difficult to prove analytically, but the simulations show that its standard deviation is very small with respect to the mean value \eqref{eq:6a} either. Hence, we believe it is justified to assume that the standard deviation at least shares the upper bound with the mean value. Accordingly, we assume in the sequel, that $o (\norm{y_i}^2)$ is negligible and the standard deviation of $\norm{y_i^2}$ satisfies $\sd (\norm{y_i^2}) < \frac{d}{n}$. In view of this, Taylor's expansion provides the following linear approximation of the weighting function for $\Delta = \frac{1}{2 \alpha} \diag (\norm{y_i}^2)$
\begin{equation}\label{eq:7}
		\diag(\omega) = \mathbf{I} - \frac{1}{2 \alpha} \diag \left(\norm{y_i}^2\right) + o\left(\diag \left(\norm{y_i}^2\right)\right) \approx \mathbf{I} - \Delta.
\end{equation}

Next, we show that the total and between scatter matrices for weighted data $Z_0$ can be represented as slightly perturbed corresponding matrices for isotropic data $Y$.

\begin{lemma}\label{lem:3}
	For $T_{Z_0}$ given by \eqref{eq:4a} and $B_{Z_0}$ given by \eqref{eq:4b} we have
	\begin{equation}\label{eq:8a}
		T_{Z_0} = T_Y + \delta T_Y \mbox{ where } \delta T_Y \ll T_Y
	\end{equation}
and
	\begin{equation}\label{eq:8b}
		B_{Z_0} = B_Y + \delta B_Y \mbox{ where }  \delta B_Y \ll B_Y.
	\end{equation}
\end{lemma}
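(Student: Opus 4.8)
The plan is to insert the linear approximation \eqref{eq:7} of the weighting matrix into the exact expressions \eqref{eq:4a} and \eqref{eq:4b} from Lemma \ref{lem:2}, expand, and collect the first-order term in $\Delta$ as the perturbation, deferring the quadratic term to the negligible remainder. I would treat the two claims \eqref{eq:8a} and \eqref{eq:8b} in parallel, since they share the same algebraic structure: both have the form $Y^T \diag(\omega) K \diag(\omega) Y$ for $K = F$ and $K = H$ respectively. The first step is to substitute $\diag(\omega) \approx \mathbf{I} - \Delta$ into $T_{Z_0} = Y^T \diag(\omega) F \diag(\omega) Y$ and multiply out, obtaining $Y^T F Y - Y^T \Delta F Y - Y^T F \Delta Y + Y^T \Delta F \Delta Y$. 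By Remark \ref{lem:1} the leading term $Y^T F Y$ is just the total scatter of the (already centered, isotropic) data $Y$, i.e. $T_Y$, so I would identify $\delta T_Y = -Y^T \Delta F Y - Y^T F \Delta Y + Y^T \Delta F \Delta Y$ and, analogously using Remark \ref{rem:1}, $\delta B_Y = -Y^T \Delta H Y - Y^T H \Delta Y + Y^T \Delta H \Delta Y$ with leading term $Y^T H Y = B_Y$.

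The second step is to show these perturbation terms are genuinely small relative to the leading terms, which is where the content lies. The key leverage is Lemma \ref{lem:20}: the entries of $\Delta = \frac{1}{2\alpha} \diag(\norm{y_i}^2)$ are controlled by the squared norms $\norm{y_i}^2$, whose average is $d/n \ll 1$ and whose standard deviation we are permitted to bound by the same quantity. Thus $\Delta$ is, entrywise, of order $d/n$, so the first-order terms $Y^T \Delta F Y$ and $Y^T \Delta H Y$ carry a single factor of this small scale and dominate the genuinely second-order term $Y^T \Delta F \Delta Y$, which I would absorb into the $o\left(\diag\left(\norm{y_i}^2\right)\right)$ remainder already present in \eqref{eq:7}. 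The conclusion $\delta T_Y \ll T_Y$ then reduces to the statement that the correction is down by a factor of order $d/n = \overline{\norm{y_i}^2}$ from the unperturbed matrix, which is precisely the smallness claimed.

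I would expect the main obstacle to be the rigorous meaning of the relation $\ll$ between matrices, rather than any single calculation. The displayed claims $\delta T_Y \ll T_Y$ and $\delta B_Y \ll B_Y$ are entrywise-heuristic rather than norm-statements, and making them precise requires committing to a matrix norm (or to the eigenvalue comparison that \eqref{eq:2} ultimately cares about) and then verifying that multiplication by $\Delta$ contracts that norm by the factor $d/n$. The honest difficulty is that $Y^T \Delta F Y$ need not be uniformly small merely because $\Delta$ has small diagonal entries on average: one must use that $Y$ is in isotropic position, so its columns have unit total scatter and no direction is inflated, to prevent the weighting from concentrating its effect. I would lean on the assumed bound $\sd(\norm{y_i}^2) < d/n$ to argue that the $\omega_i$ are tightly clustered around $1$, so that $\Delta$ behaves like a near-scalar small multiple of the identity, and then the perturbation inherits the smallness of $\Delta$ directly; the remaining off-diagonal discrepancies are the negligible off-diagonal effects already flagged in the motivation preceding \eqref{eq:5}.
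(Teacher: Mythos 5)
Your proposal follows essentially the same route as the paper's proof: substitute the linear approximation \eqref{eq:7} into \eqref{eq:4a} and \eqref{eq:4b}, expand, identify the first-order terms as $\delta T_Y$ and $\delta B_Y$, and discard the quadratic term as negligible, with the paper additionally using $FY = Y$ (centeredness of $Y$) to simplify to $\delta T_Y = -2Y^T \Delta Y$. Your closing concern about the rigorous meaning of $\ll$ is fair but is not resolved by the paper either: its proof likewise just asserts smallness of $\Delta$, and the quantitative control is deferred to Proposition \ref{thm:1}.
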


\begin{proof}
	The proof is direct and uses linear approximation of weights to facilitate matrices' manipulation.
	
	For $T_{Z_0}$ given by \eqref{eq:4a} linear approximation of weights yields directly
		\begin{multline}\label{eq:133}
			T_{Z_0} = Y^T \text{diag}(\omega) F \text{diag}(\omega) Y \approx Y^T (\mathbf{I} - \Delta) F (\mathbf{I} - \Delta) Y  = \\
			= Y^T F Y - Y^T F\Delta Y - Y^T \Delta F Y + o(\Delta) \approx Y^T F Y - Y^T F\Delta Y - Y^T \Delta F Y = \\
			=  Y^T Y - Y^T \Delta Y - Y^T \Delta Y = T_Y - 2Y^T \Delta Y = T_Y + \delta T_Y,
		\end{multline}
	as $Y$ is already centered, $FY = Y$. Due to smallness of perturbation $\Delta$, the  quadratic form can be omitted and $\delta T_Y$ can be considered small indeed. The same holds for between cluster scatter matrix, so analogously for matrix $B_{Z_0}$ given by \eqref{eq:4b} we get
		\begin{multline}\label{eq:130}
			B_{Z_0} = Y^T \text{diag}(\omega) H \text{diag}(\omega) Y \approx Y^T (\mathbf{I} - \Delta ) H (\mathbf{I} - \Delta ) Y =\\
			=  Y^T H Y - Y^T  H \Delta Y - Y^T  \Delta  H Y + o(\Delta)  \approx Y^T H Y - Y^T  H \Delta Y - Y^T  \Delta H Y = \\
			= B_Y - Y^T  H \Delta Y - Y^T  \Delta H Y = B_Y + \delta B_Y,
		\end{multline}
	which concludes the proof.
\end{proof}

For slightly perturbed eigenproblem as in Lemma \ref{lem:3}, the following lemma gives explicit formulas for eigenvalues and their corresponding eigenvectors in terms of the solution for the original eigenproblem.

\begin{lemma}[Eigenproblem perturbation]\label{lem:4}
	For symmetric and semi positive definite matrices $K_0, M_0 \in \mathbb{R}^{d \times d}$ we consider a generalized eigenproblem
	\begin{equation*}
		K_0 a^0_j = \lambda^0_j M_0 a^0_j \quad \text{for} \quad j = 1, \ldots d
	\end{equation*}
	and its perturbation
	\begin{equation*}
		K a_j = \lambda_j M a_j \quad \text{for} \quad j = 1, \ldots d,
	\end{equation*}
	with $K = K_0 + \delta K_0$ and $M = M_0 + \delta M_0$, where the perturbation is assumed to be small $\delta K_0 \ll K_0$ and $\delta M_0 \ll M_0$. Then the eigenvalues $\lambda_j$ and eigenvectors $a_j$ of the perturbed problem can be expressed in terms of the original eigenvalues $\lambda^0_1, \ldots \lambda^0_d$ and eigenvectors $a^0_1, \ldots \lambda^0_d$ as follows
	\begin{multline}\label{eq:10a}
		\lambda_j = \lambda^0_j + \delta \lambda^0_j = \lambda^0_j + \delta \lambda^{0 (I)}_j  + o(\delta \lambda^0_j) \approx \\
		\approx \lambda^0_j + \delta \lambda^{0 (I)}_j = \lambda^0_j + \left(a^0_j\right)^T \left(\delta K_0 - \lambda^0_j (\delta M_0)\right) \left(a^0_j\right)
	\end{multline}
	and
	\begin{multline}\label{eq:10b}
		a_j = a^0_j + \delta a^0_j = a^0_j + \delta a^{0 (I)}_j + o(\delta a^0_j) \approx a^0_j + \delta a^{0 (I)}_j = \\
		=  a^0_j\left(1-\frac{1}{2}\left(a^0_j\right)^T \left(\delta M_0\right) \left(a^0_j\right)\right) + \sum_{\substack{i = 1 \\ i \neq j}}^{d} \frac{\left(a^0_i\right)^T \left(\delta K_0 - \lambda^0_i \delta M_0\right) \left(a^0_i\right)}{\lambda^0_j - \lambda^0_i},
	\end{multline}
	where the superscript $(I)$ denotes first order term. Higher order terms are omitted as negligible due to the assumption of small perturbation.
\end{lemma}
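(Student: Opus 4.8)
The plan is to treat this as standard first-order (Rayleigh--Schr\"odinger type) perturbation theory for a generalized eigenproblem, regarding $\delta K_0$ and $\delta M_0$ as small and expanding both eigenvalues and eigenvectors to first order. I would begin from the trial expressions $\lambda_j = \lambda_j^0 + \delta\lambda_j^0$ and $a_j = a_j^0 + \delta a_j^0$, substitute them into the perturbed equation $K a_j = \lambda_j M a_j$, expand the product $(K_0 + \delta K_0)(a_j^0 + \delta a_j^0) = (\lambda_j^0 + \delta\lambda_j^0)(M_0 + \delta M_0)(a_j^0 + \delta a_j^0)$, and collect by order. The zeroth-order terms reproduce $K_0 a_j^0 = \lambda_j^0 M_0 a_j^0$, and dropping the products of two perturbations leaves the first-order identity
\begin{equation*}
	(K_0 - \lambda_j^0 M_0)\,\delta a_j^0 = \big(\delta\lambda_j^0 M_0 - \delta K_0 + \lambda_j^0 \delta M_0\big) a_j^0 .
\end{equation*}

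For the eigenvalue correction I would left-multiply this identity by $(a_j^0)^T$. Since $K_0$ and $M_0$ are symmetric, transposing the unperturbed eigenproblem gives $(a_j^0)^T(K_0 - \lambda_j^0 M_0) = 0$, so the entire left-hand side annihilates and only the projected right-hand side survives. Imposing the normalisation $(a_j^0)^T M_0 a_j^0 = 1$ then isolates $\delta\lambda_j^0 = (a_j^0)^T(\delta K_0 - \lambda_j^0 \delta M_0) a_j^0$, which is exactly the first-order term in \eqref{eq:10a}.

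For the eigenvector correction I would expand $\delta a_j^0 = \sum_{i=1}^{d} c_{ji}\, a_i^0$ in the (assumed complete) basis of unperturbed eigenvectors. Projecting the first-order identity onto $a_i^0$ for $i \neq j$, and using $(a_i^0)^T K_0 = \lambda_i^0 (a_i^0)^T M_0$ together with the $M_0$-orthonormality $(a_i^0)^T M_0 a_j^0 = \delta_{ij}$, collapses the left-hand side to $(\lambda_i^0 - \lambda_j^0)\,c_{ji}$ and yields $c_{ji} = (a_i^0)^T(\delta K_0 - \lambda_j^0 \delta M_0) a_j^0 / (\lambda_j^0 - \lambda_i^0)$, the off-diagonal coefficients forming the sum in \eqref{eq:10b}. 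The diagonal coefficient $c_{jj}$ is left undetermined by the first-order equation (it is the component along $a_j^0$ itself) and must be fixed separately by requiring $(a_j)^T M a_j = 1$; expanding this constraint to first order gives $c_{jj} = -\tfrac12 (a_j^0)^T \delta M_0 a_j^0$, which supplies the scalar prefactor multiplying $a_j^0$. Reassembling $a_j = a_j^0 + \delta a_j^0$ recovers the stated formula, and the omission of the remainders $o(\delta\lambda_j^0)$, $o(\delta a_j^0)$ is justified by the smallness hypotheses $\delta K_0 \ll K_0$ and $\delta M_0 \ll M_0$.

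The main obstacle is not the algebra but the structural hypotheses that legitimise the projection argument. The $M_0$-orthonormality and completeness of $\{a_i^0\}$ require $M_0$ to be genuinely positive definite rather than merely semi-positive definite as stated; this is harmless in our application, where $M_0$ plays the role of $T_Y = \mathbf{I}$. More delicately, the off-diagonal formula divides by $\lambda_j^0 - \lambda_i^0$, so the expansion is valid only when the relevant eigenvalues are \emph{simple} and, quantitatively, when the perturbation is small relative to the spectral gaps. I would therefore carry non-degeneracy of the spectrum as a standing assumption and note that coincident eigenvalues would instead demand the usual degenerate treatment (diagonalising the perturbation within each eigenspace), a complication that does not arise for the $B_Y,\,T_Y$ problem considered here.
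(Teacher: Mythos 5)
Your proof is correct, but it is worth knowing that the paper itself does not prove this lemma at all --- its entire ``proof'' is a pointer to a standard reference on matrix perturbation theory. Your derivation is the classical first-order Rayleigh--Schr\"odinger argument for the symmetric generalized eigenproblem, i.e.\ precisely the content such a reference would supply: collect orders, project the first-order identity onto $a_j^0$ (symmetry annihilates the left-hand side) to obtain the eigenvalue correction, expand $\delta a_j^0$ in the $M_0$-orthonormal eigenbasis for the off-diagonal coefficients, and fix the diagonal coefficient from the normalization $(a_j)^T M a_j = 1$. Relative to the bare citation, your write-up buys two things. First, it surfaces hypotheses that the lemma's statement suppresses: $M_0$ must be positive definite (not merely semi positive definite) for $\{a_i^0\}$ to form a complete $M_0$-orthonormal basis, and the division by $\lambda_j^0 - \lambda_i^0$ requires the eigenvalue in question to be simple. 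Both hold where the lemma is actually used (Proposition \ref{thm:1} takes $M_0 = T_Y = \mathbf{I}$, and only the leading Fisher eigenvalues enter the bound); note, however, that your closing claim that degeneracy ``does not arise'' for the $B_Y,\,T_Y$ problem is not quite accurate --- the zero eigenvalue of $B_Y$ has multiplicity $d-k+1$ since $\rank(B_Y)\leq k-1$, but this is harmless because downstream only the eigenvalue formula \eqref{eq:10a} for the generically simple non-zero eigenvalues is invoked. Second, your projection argument produces the correct eigenvector correction
\begin{equation*}
	\delta a_j^{0(I)} = -\tfrac{1}{2}\left(\left(a_j^0\right)^T \delta M_0\, a_j^0\right) a_j^0 + \sum_{\substack{i=1 \\ i \neq j}}^{d} \frac{\left(a_i^0\right)^T\left(\delta K_0 - \lambda_j^0\, \delta M_0\right) a_j^0}{\lambda_j^0 - \lambda_i^0}\; a_i^0,
\end{equation*}
which quietly corrects the paper's displayed \eqref{eq:10b}: as printed, each summand there is a scalar (the factor $a_i^0$ is missing), so the sum cannot be added to a vector, and the numerator carries $a_i^0$ and $\lambda_i^0$ where $a_j^0$ and $\lambda_j^0$ should stand. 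So you should not say you ``recover the stated formula'' --- you recover the formula the paper evidently intended to state; since only \eqref{eq:10a} is used in the proof of Proposition \ref{thm:1}, the discrepancy has no consequences for the paper's results.
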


\begin{proof}
	Proof can be found for instance in \cite{perturb}.
\end{proof}

\begin{corollary}\label{cor:3}
Eigenvalues and eigenvectors for generalized eigenproblem with matrices $B_{Z_0}$ and $T_{Z_0}$ (Fisher's task) can be expressed in terms of perturbed eigenvalues and eigenvectors of the problem given by $B_Y$ and $T_Y$  following the formulas of Lemma \ref{lem:4}.
\end{corollary}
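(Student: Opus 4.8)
The plan is to obtain Corollary \ref{cor:3} as a direct application of the abstract perturbation result of Lemma \ref{lem:4}, under the identification $K_0 = B_Y$, $M_0 = T_Y$, $K = B_{Z_0}$, $M = T_{Z_0}$, together with $\delta K_0 = \delta B_Y$ and $\delta M_0 = \delta T_Y$. All that is then required is to check that the four matrices satisfy the structural hypotheses of Lemma \ref{lem:4} and that the perturbations are genuinely small, so that the lemma can be invoked verbatim.

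First I would verify that $B_{Z_0}$, $T_{Z_0}$, $B_Y$ and $T_Y$ are symmetric and semi positive definite. Using the representations \eqref{eq:4a} and \eqref{eq:4b} together with the symmetry of $\diag(\omega)$, of $F$ (Remark \ref{lem:1}) and of $H$ (Remark \ref{rem:1}), both $T_{Z_0}$ and $B_{Z_0}$ are symmetric. For definiteness, $T_{Z_0} = Z_0^T Z_0$ by \eqref{eq:118} is a Gram matrix, hence semi positive definite (in fact positive definite under the full-rank assumption on the covariance), while $B_{Z_0} = (\diag(\omega) Y)^T H (\diag(\omega) Y)$ inherits semi positive definiteness from $H$, which is semi positive definite by Remark \ref{rem:1}. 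The matrices $B_Y$ and $T_Y$ are simply the special case $\omega = \mathbf{1}$ of these expressions, so the identical argument applies; moreover $T_Y = \mathbf{I}$ by the isotropic normalization, which makes $M_0 = \mathbf{I}$ and turns the generalized problem into the standard eigenproblem for $B_Y$ already encountered in Lemma \ref{lem:10}.

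Second, the smallness of the perturbations is exactly the content of Lemma \ref{lem:3}: relations \eqref{eq:8a} and \eqref{eq:8b} give $T_{Z_0} = T_Y + \delta T_Y$ and $B_{Z_0} = B_Y + \delta B_Y$ with $\delta T_Y \ll T_Y$ and $\delta B_Y \ll B_Y$, which are themselves justified by the linear approximation of the weights \eqref{eq:7} and the smallness of the average squared norms established in Lemma \ref{lem:20}. With the structural hypotheses and the smallness of $\delta K_0,\, \delta M_0$ both in place, Lemma \ref{lem:4} applies directly and yields formulas \eqref{eq:10a} and \eqref{eq:10b} for the eigenvalues and eigenvectors of the $(B_{Z_0}, T_{Z_0})$ problem expressed in terms of those of the $(B_Y, T_Y)$ problem, which is precisely the assertion of the corollary.

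The main point requiring care is the eigenvector formula \eqref{eq:10b}, whose sum carries denominators $\lambda^0_j - \lambda^0_i$ and therefore degenerates if the unperturbed Fisher eigenvalues are repeated. I would note that, because $T_Y = \mathbf{I}$, these unperturbed eigenvalues are those of the symmetric matrix $B_Y$ and are generically distinct, so the expansion is well posed in the typical case; in the presence of a repeated eigenvalue one must restrict \eqref{eq:10b} to the distinct eigenvalues (or pass to the corresponding invariant subspace), whereas the eigenvalue expansion \eqref{eq:10a} remains valid without this restriction. This degeneracy caveat is the only genuine obstacle, and it is inherited from Lemma \ref{lem:4} rather than created by the present reduction.
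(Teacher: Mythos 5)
Your proposal is correct and matches the paper's (implicit) argument: the paper states Corollary \ref{cor:3} with no separate proof, treating it as an immediate application of Lemma \ref{lem:4} with $K_0 = B_Y$, $M_0 = T_Y$ and the perturbations $\delta B_Y$, $\delta T_Y$ supplied by Lemma \ref{lem:3}, which is exactly your identification. Your explicit verification of symmetry and semi positive definiteness (via the Gram-matrix form of $T_{Z_0}$ and Remark \ref{rem:1} for $H$), and your caveat about the denominators $\lambda^0_j - \lambda^0_i$ in \eqref{eq:10b}, are sound additions rather than deviations.
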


Now, let us recall several facts on matrix norms that will be used in the course of the proposition's proof.
\begin{remark}\label{lem:9}
	For a symmetric matrix $A = (a_{i,j})_{\substack{i = 1, \ldots, d \\ j = 1, \ldots, d}}$, $A \in \mathbb{R}^{d \times d}$, let $\abs{\lambda^A_{\max}}$ denote the maximum absolute value of the eigenvalues of $A$. Let
	\begin{enumerate}
		\item $\norm{A}_S = \abs{\lambda^A_{\max}}$ define and denote \textbf{spectral norm} of matrix $A$,
		\item $\norm{A}_F = \sqrt{\sum_{i = 1}^d \sum_{j = 1}^d a_{ij}^2} = \sqrt{\tr(A A^T)}$ define and denote \textbf{Frobenius norm} of matrix $A$.
	\end{enumerate}
	Then $\norm{A}_S \leq \norm{A}_F$ and for any vector $x \in \mathbb{R}^d$ we have $\abs{x^T A x} \leq \abs{\lambda^A_{\max}} \norm{x}$, which yield together
	\begin{equation}\label{eq:156}
		\abs{x^T A x} \leq \abs{\lambda^A_{\max}} \norm{x} = \norm{A}_S \norm{x} \leq \norm{A}_F \norm{x}.
	\end{equation}
\end{remark}

\begin{proof}
	Proof can be found for instance in \cite{rayleigh}.
\end{proof}

\begin{remark}\label{rem:2}
	For two symmetric matrices $A,B \in \mathbb{R}^{d \times d}$ and a constant $c \in \mathbb{R}$ by norm definition the following conditions are fulfilled
	\begin{enumerate}
		\item $\norm{c A} \leq \abs{c} \norm{A}$
		\item $\norm{A+B} \leq \norm{A} + \norm{B}$.
	\end{enumerate}
	Additionally, for Frobenius norm submultiplicative condition is fulfilled (also see \cite{rayleigh})
	\begin{enumerate}
		\item[(c)] $\norm{AB}_F \leq \norm{A}_F \cdot \norm{B}_F$.
	\end{enumerate}
\end{remark}

Now, let us formulate the main proposition that gives the upper bound on the difference between structure distinctness for original and transformed data. Although stated in terms of $X$ and $Z$ data it actually captures the effect of weighting as isotropization does not affect it in any way.

\begin{proposition}\label{thm:1}
	In agreement with our previous notation and assumptions
	\begin{equation}\label{eq:12}
		\abs{\bar{\lambda^Z} - \bar{\lambda^X}} \leq \frac{1}{\sqrt{n}} \left( \frac{d}{ \alpha} \left(\bar{\lambda^X} + \sqrt{k}\right) \right).
	\end{equation}
\end{proposition}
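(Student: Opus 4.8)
The plan is to reduce everything to the perturbation formula of Lemma \ref{lem:4} and then control the resulting quadratic forms by exploiting the orthonormality built into the isotropic position. First I would invoke Corollary \ref{cor:1}: isotropization leaves the Fisher eigenvalues untouched, so $\bar{\lambda^X} = \bar{\lambda^Y}$ and it suffices to bound $\abs{\bar{\lambda^Z} - \bar{\lambda^Y}}$ (the value for $Z$ being that of the centered $Z_0$, as the scatter decomposition is translation invariant). By Lemma \ref{lem:3} the matrices $T_{Z_0}, B_{Z_0}$ are small perturbations of $T_Y = \mathbf{I}$ and $B_Y$, with $\delta T_Y = -2Y^T\Delta Y$ and $\delta B_Y = -(Y^TH\Delta Y + Y^T\Delta HY)$; Corollary \ref{cor:3} then lets me apply Lemma \ref{lem:4} with $M_0 = T_Y = \mathbf{I}$, so the first-order shift is $\delta\lambda_j = (a^0_j)^T(\delta B_Y - \lambda^0_j\,\delta T_Y)(a^0_j)$, where $\{a^0_j\}$ is the orthonormal eigenbasis of $B_Y$. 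Averaging over $j = 1,\dots,k-1$ and using the triangle inequality gives
$$\abs{\bar{\lambda^Z} - \bar{\lambda^Y}} \le \frac{1}{k-1}\sum_{j=1}^{k-1}\Big(\abs{(a^0_j)^T\delta B_Y a^0_j} + \lambda^0_j\,\abs{(a^0_j)^T\delta T_Y a^0_j}\Big).$$

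The crucial step is to evaluate these quadratic forms after the substitution $u_j = Y a^0_j$. Because the data is in isotropic position, $Y^TY = T_Y = \mathbf{I}$, so $\norm{u_j}^2 = (a^0_j)^T Y^TY a^0_j = \norm{a^0_j}^2 = 1$; that is, $u_j$ is a \emph{unit} vector in $\mathbb{R}^n$. This is the observation that prevents a spurious factor of $\norm{Y}_F^2 = d$ from creeping in: rather than bounding the full matrices $\delta T_Y, \delta B_Y$ by repeated Frobenius submultiplicativity, I push $Y$ into the vectors. Then $(a^0_j)^T\delta T_Y a^0_j = -2\,u_j^T\Delta u_j$ and $(a^0_j)^T\delta B_Y a^0_j = -\,u_j^T(H\Delta + \Delta H)u_j$. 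For the first form, $\Delta$ is symmetric so Remark \ref{lem:9} gives $\abs{u_j^T\Delta u_j}\le\norm{\Delta}_S\le\norm{\Delta}_F$. For the second, $H\Delta$ is not symmetric, so I split it and apply Cauchy--Schwarz together with Remark \ref{rem:1}: $\abs{u_j^TH\Delta u_j}\le\norm{Hu_j}\norm{\Delta u_j}\le\norm{H}_F\norm{\Delta}_F = \sqrt{k}\,\norm{\Delta}_F$, since $H$ has $k$ unit eigenvalues and hence $\norm{H}_F = \sqrt{k}$; the conjugate term is bounded identically. This yields $\abs{(a^0_j)^T\delta T_Y a^0_j}\le 2\norm{\Delta}_F$ and $\abs{(a^0_j)^T\delta B_Y a^0_j}\le 2\sqrt{k}\,\norm{\Delta}_F$.

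It then remains to estimate $\norm{\Delta}_F$, where $\Delta = \frac{1}{2\alpha}\diag(\norm{y_i}^2)$, so $\norm{\Delta}_F = \frac{1}{2\alpha}\big(\sum_{i}\norm{y_i}^4\big)^{1/2}$. Here I would use Lemma \ref{lem:20}, which gives $\frac{1}{n}\sum_i\norm{y_i}^2 = d/n$, together with the concentration hypothesis recorded just before the statement (that $\norm{y_i}^2$ has negligible spread about its mean, with $\sd(\norm{y_i}^2) < d/n$): this forces $\sum_i\norm{y_i}^4 \approx n\,(d/n)^2 = d^2/n$, whence $\norm{\Delta}_F \approx \frac{1}{2\alpha}\cdot\frac{d}{\sqrt{n}} = \frac{d}{2\alpha\sqrt{n}}$. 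I expect this to be the main obstacle, or at least the only genuinely non-algebraic step: passing from the first-moment control of Lemma \ref{lem:20} to a bound on the \emph{fourth} moment $\sum_i\norm{y_i}^4$ cannot be done from the mean alone and relies entirely on the heuristic concentration assumption. This is precisely why the preceding discussion posits that the standard deviation of $\norm{y_i}^2$ shares the upper bound of its mean.

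Finally I would assemble the pieces. Substituting the two quadratic-form bounds into the averaged inequality, using $\frac{1}{k-1}\sum_{j=1}^{k-1}\lambda^0_j = \bar{\lambda^Y} = \bar{\lambda^X}$ and $2\norm{\Delta}_F = d/(\alpha\sqrt{n})$,
$$\abs{\bar{\lambda^Z} - \bar{\lambda^Y}} \le \frac{1}{k-1}\sum_{j=1}^{k-1}\big(2\sqrt{k}\,\norm{\Delta}_F + 2\lambda^0_j\norm{\Delta}_F\big) = 2\norm{\Delta}_F\big(\sqrt{k} + \bar{\lambda^X}\big) = \frac{d}{\alpha\sqrt{n}}\big(\bar{\lambda^X} + \sqrt{k}\big),$$
which is exactly \eqref{eq:12}. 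Throughout I would keep track of the first-order truncation in Lemma \ref{lem:4} and the discarded $o(\cdot)$ terms from the weight expansion \eqref{eq:7}, all of which are absorbed into the small-perturbation regime already justified by Lemma \ref{lem:3}.
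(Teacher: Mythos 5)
Your overall skeleton is the same as the paper's: reduce to $Y$ via Corollary \ref{cor:1}, get the first-order shift $(a_j^Y)^T(\delta B_Y - \lambda_j^Y \delta T_Y)(a_j^Y)$ from Lemma \ref{lem:3} and Corollary \ref{cor:3}, exploit $\norm{Ya_j^Y}=1$ in isotropic position, use $\norm{H}_F=\sqrt{k}$, and average at the end; the final assembly is also fine. The genuine divergence --- and the gap --- is that you apply the triangle inequality to split $\delta B_Y$ from $\lambda_j^Y\delta T_Y$ at the outset and then bound each quadratic form by $\norm{\Delta}_F$ for the \emph{raw} $\Delta = \frac{1}{2\alpha}\diag(\norm{y_i}^2)$. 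The paper keeps the combination intact, writes $\delta B_Y - \lambda_j^Y\delta T_Y = Y^T\left(\Delta(\lambda_j^Y I - H) + (\lambda_j^Y I - H)\Delta\right)Y$, and before invoking any norm decomposes $\diag(\norm{y_i}^2) = \diag\left(\norm{y_i}^2 - \frac{d}{n}\right) + \frac{d}{n}\mathbf{I}$. The constant part contributes \emph{exactly zero}: it collapses to $\frac{d}{n}(a_j^Y)^T\left((\lambda_j^Y \mathbf{I} - B_Y)+(\lambda_j^Y \mathbf{I} - B_Y)\right)a_j^Y = 0$ by the eigenvalue equation (equivalently, uniform weights are a pure rescaling and Fisher's task is scale-invariant). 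Only the centered diagonal survives, and its Frobenius norm is $\sqrt{n}\,\sd(\norm{y_i}^2) \leq \sqrt{n}\cdot\frac{d}{n} = \frac{d}{\sqrt{n}}$ directly from the assumed bound on the standard deviation; no fourth moment ever appears.

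This cancellation is precisely what your argument lacks, and it bites exactly at the step you yourself flagged as the main obstacle. Because your $\Delta$ is uncentered, $\norm{\Delta}_F = \frac{1}{2\alpha}\left(\sum_i \norm{y_i}^4\right)^{1/2}$ carries the mean contribution: $\sum_i\norm{y_i}^4 = n\var(\norm{y_i}^2) + \frac{d^2}{n}$. Under the paper's stated assumption $\sd(\norm{y_i}^2) < \frac{d}{n}$ this only gives $\norm{\Delta}_F < \frac{\sqrt{2}\,d}{2\alpha\sqrt{n}}$, so your final bound is $\frac{\sqrt{2}\,d}{\alpha\sqrt{n}}\left(\bar{\lambda^X}+\sqrt{k}\right)$, off from \eqref{eq:12} by a factor $\sqrt{2}$; your approximation $\sum_i\norm{y_i}^4 \approx \frac{d^2}{n}$ silently upgrades the assumption to $\sd(\norm{y_i}^2)\ll \frac{d}{n}$, which is strictly stronger than what the paper posits. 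The deeper point is that the Frobenius bound is genuinely lossy on the mean part of $\Delta$: for $\frac{d}{2\alpha n}\mathbf{I}$ the true quadratic form against a unit vector is of order $\frac{d}{n}$, while its Frobenius norm is of order $\frac{d}{\sqrt{n}}$ --- an inflation by $\sqrt{n}$ that the paper avoids by removing that part exactly rather than bounding it. So: right strategy and correct algebra, but missing the centering/cancellation idea costs you either the stated constant or an extra, stronger concentration assumption.
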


\begin{proof}
	Weighting would not affect structure distinctness if the weights were equal, as the Fisher's task is scale invariant. Therefore, possible perturbation in structure distinctness is entirely due to the variance of weights which can be claimed to be very small (as earlier mentioned, we found it justified to assume that $\norm{y_i^2}$ satisfies $\sd (\norm{y_i^2}) < \frac{d}{n}$). As such, the idea of the proof is to translate the small variance of weights into possible perturbation of the resulting structure distinctness and provide an upper bound on it. For that purpose Corollary \ref{cor:3} is used and a linear approximation of the weights together with basic matrix norm properties lead to the final approximation. %Detailed calculations are given in \cite{report}.
	
	To estimate the difference between $\lambda_j^Z$ and $\lambda_j^Y$ we use perturbation formula \ref{eq:10a}. For generalized Fisher's eigenproblem it takes the form
		\begin{equation}\label{eq:157}
			\lambda^Z_j = \lambda^Y_j + \left(a_j^Y\right)^T \left(\delta B_Y - \lambda^Y_j \delta T_Y\right)\left(a_j^Y\right).
		\end{equation}
	From \eqref{eq:133} and \eqref{eq:130} we have
		\begin{equation*}
			\delta T_Y = -2Y^T \Delta Y 
		\end{equation*}
	and
		\begin{equation*}
			\delta B_Y = -Y^T H \Delta Y - Y^T \Delta H Y,
		\end{equation*}
	so the difference becomes
		\begin{multline*}
			\left(\delta B_Y - \lambda^Y_j \delta T_Y\right) = 2 \lambda^Y_j Y^T \Delta Y - Y^T H \Delta Y - Y^T \Delta H Y = \\
			= Y^T \left( \Delta\left(\lambda^Y_j I - H\right) + \left(\lambda^Y_j I - H\right) \Delta \right) Y.
		\end{multline*}
	From \eqref{eq:157}
		\begin{multline}\label{eq:158}
			\abs{\lambda^Z_j -  \lambda^Y_j}= \abs{\left(a_j^Y\right)^T \left(\delta B_Y - \lambda^Y_j \delta T_Y\right)\left(a_j^Y\right)} =  \\
			= \abs{\left(a_j^Y\right)^T \left(Y^T \left(\Delta\left(\lambda^Y_j I - H\right) + \left(\lambda^Y_j I - H\right) \Delta \right) Y\right)\left(a_j^Y\right)}.
		\end{multline}
	Using the fact that $\Delta = (1/{2 \alpha}) \text{diag} (\norm{y_i}^2)$ we get
		\begin{multline*}
			\abs{\left(a_j^Y\right)^T \left(Y^T \left(\Delta\left(\lambda^Y_j I - H\right) + \left(\lambda^Y_j I - H\right) \Delta \right) Y\right)\left(a_j^Y\right)} = \\
			=  \frac{1}{2 \alpha}\left\lvert\left(a_j^Y\right)^T \left(Y^T \left(\left(\diag\left(\norm{y_i}^2\right)\right)\left(\lambda^Y_j I - H\right)\right.\right.\right. +\\
			+\left.\left.\left. \left(\lambda^Y_j I - H\right) \left(\diag\left(\norm{y_i}^2\right)\right)\right)Y\right)\left(a_j^Y\right)\right\rvert,
		\end{multline*}
	then adding to and subtracting from $\text{diag} (\norm{y_i}^2)$ the same constant $d/n$ we have
		\begin{multline*}
			\frac{1}{2 \alpha}\left\lvert\left(a_j^Y\right)^T \left(Y^T \left(\left(\diag\left(\norm{y_i}^2 - \frac{d}{n} + \frac{d}{n}\right)\right)\left(\lambda^Y_j I - H\right)\right.\right.\right. +\\
			+\left.\left.\left. \left(\lambda^Y_j I - H\right) \left(\diag\left(\norm{y_i}^2 - \frac{d}{n} + \frac{d}{n}\right)\right)\right)Y\right)\left(a_j^Y\right)\right\rvert
		\end{multline*}
	which splits into
		\begin{multline*}
			\frac{1}{2 \alpha}\left\lvert\left(a_j^Y\right)^T \left(Y^T \left(\left(\diag\left(\norm{y_i}^2 - \frac{d}{n}\right)\right)\left(\lambda^Y_j I - H\right)\right.\right.\right. +\\
			+\left.\left. \left(\lambda^Y_j I - H\right) \left(\diag\left(\norm{y_i}^2 - \frac{d}{n} \right)\right)\right)Y\right)\left(a_j^Y\right) + \\
			+ \left.\left(a_j^Y\right)^T \left(Y^T \left(\frac{d}{n} \left(\lambda^Y_j I - H\right) + \left(\lambda^Y_j I - H\right) \frac{d}{n} \right) Y\right)\left(a_j^Y\right)\right\rvert.
		\end{multline*}
	The last term equals zero as $\lambda^Y_j$ is the eigenvalue of $B_Y a_j^Y = \lambda^Y_j T_Y a_j^Y$, which is equivalent to $Y^T H Y a_j^Y = \lambda^Y_j a_j^Y$ due to the definition of $B_Y$ and the fact that for the data in isotropic position $Y^T Y = T_Y= I$. As such, its characteristic polynomial equals zero at $\lambda^Y_j$ so
		\begin{multline*}
			\left(a_j^Y\right)^T \left(Y^T \left(\frac{d}{n}\left(\lambda^Y_j I - H\right) + \left(\lambda^Y_j I - H\right) \frac{d}{n} \right)Y\right)\left(a_j^Y\right) = \\
			= \frac{d}{n}\left(a_j^Y\right)^T \left(Y^T \left(\left(\lambda^Y_j I - H\right) + \left(\lambda^Y_j I - H\right) \right)Y\right)\left(a_j^Y\right)  = \\
			= \frac{d}{n}\left(a_j^Y\right)^T \left(\left(\lambda^Y_j Y^T Y - Y^T H Y\right) + \left(\lambda^Y_j Y^T Y - Y^T H Y\right)\right) \left(a_j^Y\right) = \\
			= \frac{d}{n}\left(a_j^Y\right)^T \left(\left(\lambda^Y_j I - B_Y\right) + \left(\lambda^Y_j I - B_Y\right)\right) \left(a_j^Y\right) =0.
		\end{multline*}
	It remains to give the upper bound on the first term. As $Y$ is in isotropic position and $a_j^Y$ is standardized as an eigenvector, we have
		\begin{equation*}
			\norm{Y a_j^Y} = \sqrt{\left(Y a_j^Y\right)^T \left(Y a_j^Y\right)} = \sqrt{\left(a_j^Y\right)^T Y^T Y a_j^Y} = \sqrt{\left(a_j^Y\right)^T a_j^Y} = 1.
		\end{equation*}
	Then, using formula \eqref{eq:156} from Remark \ref{lem:9} for 
		\begin{equation*}	
			A = \left(\left(\diag\left(\norm{y_i}^2 - \frac{d}{n}\right)\right) \left(\lambda^Y_j I - H\right) + \left(\lambda^Y_j I - H\right) \left(\diag\left(\norm{y_i}^2 - \frac{d}{n}\right)\right)\right)
		\end{equation*}
	and $x = Y a_j^Y$ we obtain
		\begin{multline*}
			\frac{1}{2 \alpha}\left\lvert\left(a_j^Y\right)^T \left(Y^T \left(\left(\diag\left(\norm{y_i}^2 - \frac{d}{n}\right)\right)\left(\lambda^Y_j I - H\right)\right.\right.\right. +\\
			+\left.\left.\left. \left(\lambda^Y_j I - H\right) \left(\diag\left(\norm{y_i}^2 - \frac{d}{n} \right)\right)\right)Y\right)\left(a_j^Y\right)\right\rvert \leq \\
			\leq \frac{1}{2 \alpha} \norm{\left(\diag\left(\norm{y_i}^2 - \frac{d}{n}\right)\right) \left(\lambda^Y_j I - H\right) + \left(\lambda^Y_j I - H\right) \left(\diag\left(\norm{y_i}^2 - \frac{d}{n}\right)\right)}_F.
		\end{multline*}
	Next, rearranging the elements and using additive (b) and submultiplicative (c) norm properties from Remark \ref{rem:2} we get
		\begin{multline}\label{eq:59}
			\frac{1}{2 \alpha} \norm{\left(\diag\left(\norm{y_i}^2 - \frac{d}{n}\right)\right) \left(\lambda^Y_j I - H\right) + \left(\lambda^Y_j I - H\right) \left(\diag\left(\norm{y_i}^2 - \frac{d}{n}\right)\right)}_F = \\ 
			=\frac{1}{2 \alpha} \left\lVert\lambda^Y_j \left(\diag\left(\norm{y_i}^2 - \frac{d}{n}\right)\right) - \left(\diag\left(\norm{y_i}^2 - \frac{d}{n}\right)\right)H\right. + \\
			+ \left.\lambda^Y_j \left(\diag\left(\norm{y_i}^2 - \frac{d}{n}\right)\right) - H\left(\diag\left(\norm{y_i}^2 - \frac{d}{n}\right)\right)\right\rVert_F  \leq \\
			\leq \frac{\lambda^Y_j}{\alpha} \norm{\diag\left(\norm{y_i}^2 - \frac{d}{n}\right)}_F + \frac{1}{\alpha} \norm{\diag\left(\norm{y_i}^2 - \frac{d}{n}\right)}_F \norm{H}_F.
		\end{multline}
	Due to the formula (b) from Remark \ref{lem:9} for Frobenius norm and hat matrix properties we have
		\begin{equation*}
			\norm{H}_F = \sqrt{\tr\left(HH^T\right)} = \sqrt{\tr\left(H^2\right)} = \sqrt{k}.
		\end{equation*}
	We have $\tr(H^2) = \tr(H)$ as a sum of squared eigenvalues of $H$ which are equal $1$ or $0$ in this case. For the other term, from Frobenius norm definition in Remark \ref{lem:9} (b) and the crude estimate for the standard deviation, we get
		\begin{multline*}
			\norm{\diag\left(\norm{y_i}^2 - \frac{d}{n}\right)}_F = \sqrt{\sum_{i = 1}^n \left(\norm{y_i}^2 - \frac{d}{n} \right)^2} = \sqrt{n \cdot \var\left(\norm{y_i}^2\right)} \\
			= \sqrt{n} \cdot \sd \left(\norm{y_i}^2\right) \leq \sqrt{n} \frac{d}{n} = \frac{d}{\sqrt{n}}.
		\end{multline*}
	Now, substituting the above two inequalities into \eqref{eq:59} yields
		\begin{multline*}
			\frac{\lambda^Y_j}{\alpha} \norm{\diag\left(\norm{y_i}^2 - \frac{d}{n}\right)}_F + \frac{1}{\alpha} \norm{\diag\left(\norm{y_i}^2 - \frac{d}{n}\right)}_F \norm{H}_F \leq \\
			\leq \frac{1}{\alpha}  \frac{d}{\sqrt{n}} \left(\lambda^Y_j + \sqrt{k}\right) = \frac{1}{\sqrt{n}} \left(\frac{ d}{\alpha}\left(\lambda^Y_j + \sqrt{k}\right)\right).
		\end{multline*}
	So using all the above estimation for \eqref{eq:158} we get
		\begin{equation*}
			\abs{\lambda^Z_j -  \lambda^Y_j} \leq \frac{1}{\sqrt{n}} \left(\frac{ d}{\alpha}\left(\lambda^Y_j + \sqrt{k}\right)\right) \qquad \text{for } j = 1, \ldots, d.
		\end{equation*}
	After averaging over the $k-1$ non-zero eigenvalues and using the fact that isotropic transformation does not change structure distinctness it yields
		\begin{equation*}
			\abs{\bar{\lambda^Z} -  \bar{\lambda^X}} = \abs{\bar{\lambda^Z} -  \bar{\lambda^Y}} \leq \frac{1}{\sqrt{n}} \left(\frac{d}{\alpha}\left(\bar{\lambda^Y} + \sqrt{k}\right)\right) = \frac{1}{\sqrt{n}} \left(\frac{d}{\alpha}\left(\bar{\lambda^X} + \sqrt{k}\right)\right)
		\end{equation*}
	and concludes the proof.
	
\end{proof}

Since the sample size $n$ is assumed to be very large with respect to the number of dimensions $d$ and the number of clusters $k$, the resulting value of the upper bound in Proposition \ref{thm:1} is very small. It implies that the original clustering structure is affected by the data transformation only to a very little extent and the prior distinctness level is preserved. First, it prevents structure destruction due to the data transformation. Second, it shows that structure distinctness assessments and comparisons made for transformed data sets allow for drawing conclusions for the original data sets. Simulation studies confirm negligible effect of the transformation.

%%%%%%%%%%%%%%%%%%%%%%% SIMILARITY BETWEEN SPACES %%%%%%%%%%%%%%%%%%%%%%%
\section{Similarity between subspaces}\label{diss}

%%%%%%%%%% similarity coefficient
\subsection{Similarity coefficient}

The concept of similarity between spaces is used to assess the difference between $PC(k-1)$ and the reference projection to $S^*$. Projections are not affected by the possible point of origin so we assume linear, not affine, structure only. Without the need for triangle inequality, a similarity measure suffices and a distance is not required.

The problem of subspace similarity assessment is vital for subspace methods gaining popularity in image recognition and face recognition in particular. Works on the topic start with \cite{yamaguchi}, which uses smallest principal angle (see \cite{hot}). Further developments are due to Wolf and Shashua (see \cite{wolf1} and \cite{wolf2}), who utilize sum of squared cosines of principal angles. We make a small variation with respect to \cite{wolf1} and instead of the sum, we utilize the mean to remain within $[0,1]$ interval. It facilitates interpretation and comparisons between different data sets. We use canonical correlations (see \cite{hot} or \cite{kmb}), which are equivalent to squared cosines of principal angles as long as the data is centered. It makes a multi-dimensional generalization of most intuitive squared cosine measure.

To give an explicit formula, we state the canonical correlation task between the two sets of $(k-1)$ column vectors --- matrix $V \in \mathbb{R}^{d \times (k-1)}$ and matrix $A \in \mathbb{R}^{d \times (k-1)}$ that span Fisher's $S^*$ and $PC(k-1)$ subspaces respectively --- in terms of an eigenproblem as $\left( (V^T V)^{-1} (V^T A) (A^T A)^{-1} (A^T V) \right) U =  U L^2$, where $U$ consists of column eigenvectors and $L^2 \in \mathbb{R}^{(k-1) \times (k-1)}$ contains squared canonical correlations on its diagonal or squared cosines of principal angles in other words (for standard Lagrangian derivation, see \cite{kmb}). So we measure subspace similarity (sss) between $V$ and $A$ as
\begin{equation}\label{eq:11}
	\sss(V,A) = \frac{1}{k-1} \sum_{l = 1}^{k-1} L^2(l,l).
\end{equation}
Similarly to simple squared cosine, it takes values from $[0,1]$ interval and increases as similarity does. In other words, the larger the value of \eqref{eq:11}, the more similar the spaces.

%%%%%%%%%% effect of data transformation
\subsection{Effect of data transformation}

The effect of data transformation on the similarity between Fisher's and $PC(k-1)$ subspaces was studied by means of simulation study. The data was generated according to the model assumptions and for each set of data parameters ($d$, $k$ and $n$) the procedure was repeated $50$ times to allow variability for each mixture parameter configuration.

\begin{figure}
\begin{center}
 \includegraphics[width=1\textwidth]{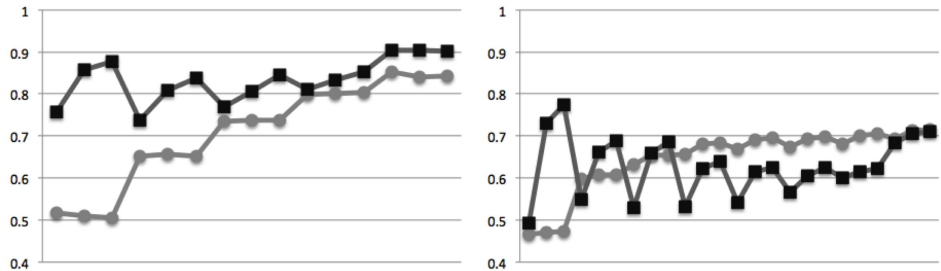}
		\caption{Similarity between spaces \eqref{eq:11} for $X$ (gray) and $Z$ (black) data, for $d=7$ (left chart) and $d=20$ (right chart), increasing triples correspond to $n=100, 300, 500$ per cluster, while subsequent triples to $k=3, \ldots, \min (d,10)$ }
		\label{rys:3}
\end{center}
\end{figure}

It can be observed that the transformation performs best for small number of clusters $k$ in a space of small dimension $d$. As shown in Fig. \ref{rys:3}, after the transformation the subspaces practically overlap. For larger $d$ this is not necessarily the case. There is substantial increase in the value of \eqref{eq:11} for small $k$ and then for large $k$ with respect to $d$ there is almost no change due to little flexibility in dimension reduction. However, in between even substantial drop in average canonical correlation is possible, as it can again be observed in Fig. \ref{rys:3}. What is worth mentioning though, is that the sample size has remarkable impact on the behavior of the average canonical correlation, which is understandable due to sparsity in higher dimensions. The increasing triples in Fig. \ref{rys:3} are all due to increasing sample size - the larger the sample the more significant the increase in average canonical correlation. Therefore, the above mentioned effect of similarity drop can be excluded by taking sample size large enough. It was observed that for $d=20$ sample size of $1500 - 2000$ per cluster prevents correlation drops even for moderate $k$. In other words, for sample size large enough meaningful increase but no significant decrease in average canonical correlation can be observed.

%%%%%%%%%%%%%%%%%%%%%%% CONCLUSIONS %%%%%%%%%%%%%%%%%%%%%%%
\section{Conclusions}\label{conc}

In this work a new method for distinctness preserving dimension reduction is proposed. It is based on a preliminary data transformation that allows Fisher's subspace to be approximated by means of PCA, which does not require the knowledge of data structure or partition. At the same time, the transformation perturbs original distinctness of the classes' structure only to a negligible extent. As such, it facilitates further structure learning in the space of reduced dimension, including assessment of the potential distinctness of the unknown structure. 

The similarity between the two subspaces of interest --- Fisher's $S^*$ requiring data partition and $PC(k-1)$ based on overall variability only --- tend to suffer from increasing space dimension $d$. Depending on the sample size and particular task considered, the acceptable values of $d$ may differ. In particular, if the number of clusters is small, the method is expected to perform well, regardless of the original space dimension. This leaves it with a wide range of possible applications, where space dimension can be preliminarily reduced and/or solutions of few clusters are required.

Although the method already presents a closed tool that may be successfully applied for a certain class of problems, it still needs further investigation that would provide insight in its limitations and possibly support its further development.

% % % % % % % % % % % % % % % % % % % % % % % % % % % % % % % % % % % % % % 
% % % % % % % ------- ACKNOWLEDGEMENTS  --------% % % % % % % % % %
% % % % % % % % % % % % % % % % % % % % % % % % % % % % % % % % % % % % % % 

\section{Acknowledgements}

This work was supported by National Science Center of Poland, grant number DEC-2011/01/N/ST6/04174.

%%%%%%%%%%% --------- BIBLIOGRAPHY  -------- %%%%%%%%%%%%%%%

%\begin{thebibliography}{99}
%
%\bibitem{ja_overlap} E. Nowakowska, J. Koronacki, S. Lipovetsky, \textit{Tractable Measure of Component Overlap \\ for Gaussian Mixture Models}, submitted for publication, available at \verb+http://www.ipipan.waw.pl/~ewano/technical_report.pdf+.
%
%\end{thebibliography}

\bibliographystyle{elsarticle-num}
\bibliography{mybibliography}

\end{document}